\newtheorem{theorem}{Theorem}[section]
\newtheorem{proposition}[theorem]{Proposition}
\newtheorem{corollary}[theorem]{Corollary}
\newtheorem{definition}[theorem]{Definition}
\newtheorem{remark}[theorem]{Remark}
\newtheorem{algthm}[theorem]{Algorithm}
\newcommand{\C}{\mathbb{C}} 
\newcommand{\G}{\Gamma}
\newcommand{\z}{\mathbf{Z}}
\newcommand{\SO}{\mathbf{SO}}
\newcommand{\On}{\mathbf{O}}
\newcommand{\D}{\mathbf{D}}
\newcommand {\eq}{\begin{equation}}
\newcommand {\feq}{\end{equation}}
\newcommand{\Geta}{\Gamma_{\eta}}
\newcommand{\Gmais}{\Gamma_{+}}
\newcommand{\p}{\mathcal{P}}
\newcommand{\R}{\mathbb{R}}
\newcommand{\pg}{\mathcal{P}(\Gamma)}
\newcommand{\bg}{\mathcal{P}}
\newcommand{\pgmais}{\mathcal{P}(\Gamma_{+})}
\newcommand{\pgflexa}{\overrightarrow{\mathcal{P}}}
\newcommand{\bgflexa}{\overrightarrow{\mathcal{P}}}
\newcommand{\pgflexamais}{\overrightarrow{\mathcal{P}}(\Gamma_{+})}
\newcommand{\pvwflexa}{\overrightarrow{\mathcal{P}}(\Gamma)}
\newcommand{\qg}{\mathcal{P}^{\eta}(\Gamma)}
\newcommand{\qvwflexa}{\overrightarrow{\mathcal{P}}^{\eta}(\Gamma)}
\newcommand{\pvwflexamais}{\overrightarrow{\mathcal{P}}(\Gamma_{+})}
\newcommand{\Rflexa}{\overrightarrow{R}}
\newcommand{\Sflexa}{\overrightarrow{S}}
\newcommand{\Rdflexa}{\overrightarrow{R}}
\newcommand{\Sdflexa}{\overrightarrow{S}}
\title{On equivariant binary differential equations}
\author{Miriam Manoel \ and \ Patr\'icia Tempesta \\	 Mathematics Department ICMC, \\ University of  S\~ao Paulo, S\~ao Paulo, Brazil \\ e-mail\textup{: \texttt{miriam@icmc.usp.br}} and \textup{\texttt{ptempest@icmc.usp.br}}}
\date{}
\begin{document}

\maketitle

\begin{abstract}
	This paper introduces the study of  occurrence of  symmetries in binary differential equations (BDEs). These  are implicit differential equations given by the zeros of a quadratic 1-form, $a(x,y)dy^2 + b(x,y)dxdy + c(x,y)dx^2 = 0,$ for $a, b, c$  smooth real functions defined on an open set of $\mathbb{R}^2$. Generically, solutions of a BDE are given as leaves of a pair of foliations, and the action of a symmetry must depend not only whether it preserves or inverts the plane orientation, but also whether it preserves or interchanges the foliations. The first main result reveals this dependence, which is given algebraically by  a formula relating three group homomorphisms defined on the symmetry group of the BDE.  The second main result adapts algebraic methods from invariant theory for representations of compact Lie groups on the space of quadratic forms on  $\R^n, n \geq 2$. With that we obtain an algorithm to compute general expressions of quadratic forms. Now, symmetric quadratic 1-forms are in one-to-one correspondence with equivariant quadratic forms on the plane, so these are treated  here as a particular case. The general forms of equivariant quadratic 1-forms under each compact subgroup of the orthogonal group ${\mathbf{O}(2)}$ are also given.
\end{abstract}

\begin{flushleft}
	{ Keywords}: binary differential equation, symmetry, group representation  \\
	
	{ 2000 MSC classification}: 34A09; 34C14; 13A50
\end{flushleft}

\section{Introduction} \label{sec:Intro}
A binary differential equation on the plane, or a BDE, is an implicit quadratic differential equation
\begin{equation}\label{BDE}
a(x,y) dy^2 + 2b(x,y)dx dy + c(x,y)dx^2 =0,
\end{equation}
where the coefficients  $a, b, c$ are real functions which we assume to be smooth on an open set  $U \subseteq {\R}^2$.  The function $\delta: U \rightarrow \R, $ $\delta(x,y)= (b^2 - ac) (x,y)$, is the  {\it discriminant function} and its zero set
\[ \Delta=\{(x,y) \in U: (b^2 - ac) (x,y)=0 \} \]
is the {\it discriminant set } of the BDE. 
The investigation of occurrence of symmetries is converted in purely algebraic terms, so there is no loss of generality in assuming that $U$ is the whole plane, which we do from now on for simplicity.
At points  where $\delta>0$,  (\ref{BDE}) defines a pair of transversal directions, and by the {\it configuration} associated with this equation we mean the distribution of all  solution curves tangent to these directions. The geometry of a BDE configuration is a subject of great interest, with important applications in differential geometry,  as the equations of lines of curvatures, characteristic curves and asymptotic curves of smooth surfaces, and in control theory (see \cite{tari} and references therein). Conditions for local stability of positive  binary differential equations ($\delta >0$) and their classification are given in \cite{guinez2} and \cite{guinez1}, with a  description of the  topological patterns that bifurcate in one-parameter families of these equations.  Singular points of a  class of positive binary equations  associated with a smooth surface are also studied in\cite{guinez2} and \cite{sotoguitierrez}, the coefficients of BDE  being given in terms  of the coefficients of the first and second fundamental forms of the surface.
Determining  models of configurations associated with BDEs has been  also addressed in many works; for example, in \cite{brucetari, brucetari2, brucetari3, dara, gutierrezguinez, tari2, tari3}  the classification of  BDEs is performed up to topological, formal, analytic or smooth equivalences. We refer to \cite{tari} for a survey on these topics.

\begin{figure}[!h]
	\centering
	\includegraphics[scale=0.5]{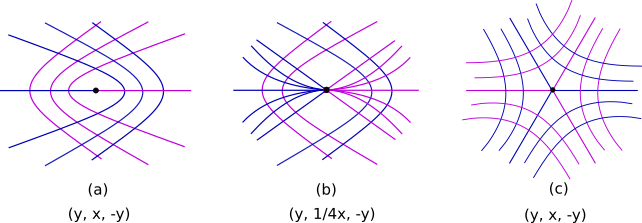}
	\caption{Configurations of symmetric BDEs. In (a) and (b) the symmetry group is $\z_2\times \z_2$ and  in (c) the symmetry group is  $\D_6$. }
	\label{Figura 1}
\end{figure}

This work is motivated by the recognition of symmetries in most normal forms that appear in the works mentioned above. In  the  linear case for example, namely when the coefficients of (\ref{BDE}) are linear functions, presence of at least one nontrivial symmetry (namely minus identity) is a necessary condition. As a consequence, our study shows which symmetry groups are attained or never attained in the nonlinear cases.  The purpose of the present paper is to introduce the systematic study of symmetries in binary differential equations.  \cite{ManTemp1} is a continuation of this study: we  investigate symmetries of a BDE in connection with an associated pair of equivariant vector fields  (see Remark~\ref{rmk: 1-forms}); also, quadratic forms with homogeneous coefficients are studied through an analysis of the number of invariant lines that appear in the configuration space imposed by their group of symmetries; in addition, we analyse possible symmetry groups of BDEs with  Morse type discriminant introduced in \cite{brucetari}. 
More recently we have driven attention to some questions relating symmetries in differential forms to pairs of foliations of special classes of surfaces; for example, for a given equivariant BDE, we ask whether this can be realized as an equation of lines of curvatures or of asymptotic lines of a surface immersed in ${\R}^n$ for some $n$.

To remark on evidences of occurrence of symmetries in configurations associated with BDEs, let us consider the configurations that appear in  \cite{brucefidal}, which are  generic topological structures of principal direction fields at umbilic points of surfaces on Euclidean spaces.  The normal forms are given as triples $(a, b,c)$ for  $c=\pm a$ in (\ref{BDE}) and their  configurations  are reproduced  in  Fig.~\ref{Figura 1}, the so-called (a) lemon, (b) monstar and (c) star.  The solution curves determine two foliations of the plane distinguished by colour, and the black point is the discriminant set.
The pictures clearly suggest an invariance of  the three configurations under reflection with respect to the $x$-axis. There is another invariance with respect to the $y$-axis, which is given by this operation followed by a change of colour. As a consequence, the composition of these two elements (minus identity) must be a symmetry which interchanges colour. In fact, we should recognize {\it a priori}  minus identity  in the set of symmetries of all these cases by their linearity, as mentioned above.  The third picture has also six rotational symmetries, three of which are colour-preserving and  the other three are colour-interchanging.  In fact, the full symmetry group of pictures (a) and (b) is ${\bf Z}_2 \times {\bf Z}_2$, generated by the reflections across the axes,  and the full symmetry group of picture (c) is the dihedral group  $\D_6$, generated by a reflection and a rotation of order six. As these examples illustrate, the group action must be defined taking colour changes into consideration at the region on the plane where (\ref{BDE}) defines a bivalued direction field. As we shall see,  the  action  of a symmetry group $\G$ of a binary differential equation must be defined on the tangent bundle through its representation on the plane combined with a group homomorphism $\eta : \G \to {\bf Z}_2 = \{\pm 1\}$ (Definition~\ref{def:symmetric BDE}). This action is translated into an action of $\Gamma$ on the space of symmetric matrices under conjugacy. This  is then used to obtain the general forms of equivariant BDEs under all compact subgroups of ${\bf O}(2)$ through invariant theory. 

The paper is organized as follows: in Section~\ref{sec: symmetry action} we introduce the notion of  symmetries in a  BDE, namely  when the equation is invariant under the linear action of a subgroup $\Gamma \leq {\bf O}(2)$. We formalize the concept using group representation theory on the tangent bundle on which the associated quadratic 1-form is defined. One of our two main results is Theorem~\ref{thm: homomorphisms formula}, which establishes a formula that reveals the effect of a symmetry in the configuration geometry  in simple algebraic terms.  In Section \ref{sec: inv theory} we generalize the results in \cite{patricia} for  $\G$-equivariant mappings with distinct  actions on the  source and target. The  results allow the computation of general forms of  equivariant mappings using an algebraic algorithm (Algorithm~\ref{algoritmo das apl geta equivariantes}). In Section~\ref{sec: general forms} we implement  these results  to calculate the general forms of equivariant BDEs under any compact subgroup of ${\bf O}(2)$, which is our second main result, summarized  in Table~1; in this section we also present a number of examples.

\section{Symmetry groups of binary differential equations} \label{sec: symmetry action}

In this section we formalize the concept of a symmetric  binary differential equation  under the linear action of a  compact Lie subgroup $\G$ of  $\On(2)$ .

Let ${\cal F}({\R}^2)$ denote the set of real $C^\infty$ quadratic differential forms $\omega$ on  $M= {\R}^2$, $\omega : TM \to {\R} $  defined on the tangent bundle
$TM = \sqcup_{u \in M} T_u M$ given by
\eq \label{eq:1-form} \omega  = a(x,y) dy^2 + 2b(x,y) dx dy + c(x,y) dx^2,
\feq
with $a,b,c$  $C^\infty$ functions on ${\R}^2$.
Let $\G$ be a compact  Lie group acting linearly on $\R^2$.  This induces an action of $\G$ on the tangent bundle,
\[ \begin{array}{ccc}
\G \times \sqcup_{u \in M} T_u M & \to & \sqcup_{u \in M} T_u M \\
(\gamma, u, v) &\mapsto & \gamma \cdot (u,v) \ = \ \bigl(\gamma u, (d \gamma\bigr)_u (v) ).
\end{array} \]
Since the action is linear, $\gamma \cdot (u,v) =(\gamma u, \gamma v )$.

Let $\eta:~\G~\rightarrow~\z_2~=~\{\pm 1\}$  be a one-dimensional representation of $\G$, acting on the target of $\omega$. We then give the following:

\begin{definition} \label{def:equivariant}
	An element $\omega \in {\cal F}({\R}^2)$ is $\G_\eta$-equivariant if, for all $\gamma \in \G$,
	\begin{equation}
	\label{eq:equivariant form}\omega(\gamma \cdot (x,y,dx,dy)) \ = \ \eta(\gamma) \omega(x,y,dx,dy).
	\end{equation}
\end{definition}

The BDE (\ref{BDE})  is the equation of zeros of (\ref{eq:1-form}), so we define

\begin{definition} \label{def:symmetric BDE}
	For $\G$ a compact Lie group acting linearly on ${\R}^2$ and $\eta:~\G~\rightarrow~\z_2$ a group homomorphism, the binary differential equation $(\ref{BDE})$ is $\G$-invariant, or $\G$ is the symmetry group of $(\ref{BDE})$, if $\omega$ in $(\ref{eq:1-form})$ is $\G_\eta$-equivariant.
\end{definition}

We notice  that the group of symmetries of a BDE generally admits, by its nature, an order-2 normal subgroup, which is equivalent to saying that the group homomorphism $\eta$ in Definition \ref{def:symmetric BDE} is nontrivial.   Hence, in the present context, symmetries are established through the group together with this homomorphism, and hence this action is denoted below by $\G_{\eta}$.

\begin{remark} \label{rmk: inclusion1}
	Let $\Sigma(\Delta) \leq {\bf O}(2)$ denote  the group of symmetries of $\Delta$, namely, the subgroup of elements of  ${\bf O}(2)$ that leave $\Delta$ setwise invariant. Then
	\eq \Gamma_{\eta} \leq \Sigma(\Delta).\label{eq: inclusion}\feq
	In other words, symmetries of a BDE are at most the symmetries of the discriminant set. This can be of practical use when detecting the symmetry group of the equation if we know the shape of $\Delta$.
\end{remark}

Solutions of (\ref{BDE})  are nonoriented curves,  associated with direction fields. At the region on the plane where the discriminant $\delta$ is positive, these form a pair of transverse foliations ${\mathscr F}_1$ and ${\mathscr F}_2$. The set of symmetries of each foliation, that is, the set of elements that leave them setwise invariant  is a subgroup of $\G$. The structure of this subgroup is discussed below.  This pair is, in turn, associated with  two oriented foliations given as integral curves of the vector fields
\eq \label{eq:vector fields}
X_i(x,y) \ = \ (a(x,y), b(x,y) +(-1)^i \sqrt{\delta(x,y)}), \ \ i=1,2.
\feq
Consider for example the equation
\[ \omega(x,y) = y dy^2 + 2x dx dy - y dx^2=0, \]
for which the associated vector fields are
\[ X_i(x,y) \ = \ (y, x +(-1)^i \sqrt{x^2 + y^2}), \ \ i=1,2. \]
The configuration of this equation is given in Fig.~\ref{Figura 1}(a). Both $X_1$ and $X_2$ are reversible-equivariant vector fields under the action of the group $\z_2$ generated by the reflection $\kappa_x$ with respect to the $x$-axis,
\[ X_i(\kappa_x (x,y)) \ = \ - \kappa_x  X_i(x,y), \\ i =1,2. \]
As the picture suggests, this reflection is in fact a symmetry of the BDE. Now, the combination of the two foliations  adds symmetries to the whole picture, leading to a configuration which is also symmetric with respect to the reflection on the $y$-axis. By the nature of this additional symmetry, this element should invert foliations. In fact, we prove that $\z_2 \times \z_2$ is the symmetry group of the BDE.   However, it is more subtle to realize how each element should act on the form $\omega$, in the sense that it is not obvious whether $\eta(\gamma) =1$ or $-1$ for each $\gamma$ in the group. As we shall see in Theorem~\ref{thm: homomorphisms formula}, this depends not only whether $\gamma$  preserves or interchanges foliations, but also whether it preserves or inverts orientation on the plane.  \\

Before we state the result, we introduce another homomorphism: Consider the open region  in ${\R} ^2$
\[ \Omega  =  \{ (x,y) \in {\R} ^2 \ : \ \delta(x,y) > 0\}, \]
and consider the restriction of the action of $\G_\eta$ on $\Omega$. This is well-defined since the discriminant set $\Delta$ is $\G$-invariant and splits the plane into two invariant regions, where $\delta$ is positive or negative. For BDEs (\ref{BDE}) for which $\Omega$ is non-empty, we introduce the homomorphism $\lambda : \G \to \z_2 = \{\pm1\}$,
\begin{equation} \label{eq:lambda}
\lambda (\gamma) = \left\{  \begin{array}{cl}
1, & \gamma({\mathscr F}_i) = {\mathscr F}_i \\
-1, & \gamma({\mathscr F}_i) = {\mathscr F}_j, \ j \neq i, \end{array} \right.
\end{equation}
$i,j \in \{1,2\}$. It follows directly from this definition that the subgroup of symmetries of each foliation ${\mathscr F}_i$, $i=1,2$,  is
$\Sigma({\mathscr F}_i) = \ker \lambda$.

\begin{theorem} \label{thm: homomorphisms formula}
	Let \ $\eta, \ \lambda : \G \to \z_2$ be the two group  homomorphisms of Definition~$\ref{def:symmetric BDE}$ and of $(\ref{eq:lambda})$. Then, for all $\gamma \in \G$,
	\[ \lambda (\gamma) = \det(\gamma) \ \eta(\gamma). \]
\end{theorem}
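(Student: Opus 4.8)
The plan is to work directly with the vector fields $X_1, X_2$ from $(\ref{eq:vector fields})$ on the region $\Omega$, since the homomorphism $\lambda$ is defined through whether $\gamma$ preserves or swaps the foliations ${\mathscr F}_i$, and these are the oriented integral curves of the $X_i$. First I would fix $\gamma \in \G$ and unwind what $\G_\eta$-equivariance of $\omega$ says about the pair $(a,b,c)$: writing $\omega$ in matrix form as $v^T A(u) v$ with $A = \begin{pmatrix} c & b \\ b & a\end{pmatrix}$ and $v = (dx, dy)^T$, equivariance $(\ref{eq:equivariant form})$ becomes $\gamma^T A(\gamma u)\gamma = \eta(\gamma) A(u)$ (using $\gamma \in \On(2)$, so $\gamma^{-1} = \gamma^T$). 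Taking determinants gives $\det A(\gamma u) = \eta(\gamma)^2 \det A(u) = \det A(u)$; but $\det A = ac - b^2 = -\delta$, which merely re-confirms that $\Delta$ and $\Omega$ are $\G$-invariant — the sign information I actually want is more delicate and lives in how $\gamma$ moves the two roots of the quadratic.

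The key computational step is to track the effect of $\gamma$ on the directions. At a point $u \in \Omega$, the two solution directions of $(\ref{BDE})$ are spanned by $X_1(u)$ and $X_2(u)$; equivalently they are the two eigendirections / null directions determined by $A(u)$. From $\gamma^T A(\gamma u)\gamma = \eta(\gamma) A(u)$ one sees that $d\gamma_u = \gamma$ maps the null directions of $A(u)$ to the null directions of $A(\gamma u)$, so $\gamma$ does permute $\{{\mathscr F}_1, {\mathscr F}_2\}$ — this is what makes $\lambda$ well-defined, and it is already implicit in the text. The real content is to decide \emph{which} permutation. I would compute $\gamma \cdot X_i(u)$ explicitly: write $X_1, X_2$ in terms of $(a, b+\sqrt\delta\,)$ and $(a, b-\sqrt\delta\,)$ and push forward by $\gamma$, then compare with $X_1(\gamma u), X_2(\gamma u)$ built from the transformed coefficients $(a', b', c')$ where $A(\gamma u) = \eta(\gamma)\,\gamma A(u)\gamma^T$. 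The transformed discriminant satisfies $\delta(\gamma u) = \delta(u)$ (from the determinant identity, since $\det\gamma = \pm 1$), so $\sqrt{\delta(\gamma u)} = \sqrt{\delta(u)}$, and the question reduces to a sign bookkeeping in a $2\times 2$ conjugation: does $\gamma$ send the "$+\sqrt\delta$" direction to the "$+\sqrt\delta$" one or to the "$-\sqrt\delta$" one? I expect this to come out as: $\gamma$ preserves the labelling of the foliations iff $\det(\gamma)\,\eta(\gamma) = 1$, which is exactly $\lambda(\gamma) = \det(\gamma)\eta(\gamma)$.

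Concretely, I would reduce to a normal-form computation at a single point: by $\On(2)$-covariance I may conjugate $A(u)$ to a convenient form — e.g. rotate and scale so that at $u$ the two null directions are symmetric about a coordinate axis, or diagonalize the symmetric matrix $A(u)$ — and then $\gamma$ acting near $u$ is represented, up to this change of basis, by either a rotation or a reflection in $\On(2)$. For a rotation the orientation of the plane is preserved and one checks directly whether the pair of null lines is rotated into itself with or without a swap; for a reflection $\det\gamma = -1$ flips orientation and reverses the cyclic order of the two null directions. Multiplying by the extra sign $\eta(\gamma)$ coming from $A(\gamma u) = \eta(\gamma)\gamma A(u)\gamma^T$ then produces the stated identity. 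The main obstacle, I anticipate, is purely one of care rather than depth: one must handle uniformly the degenerate situations (e.g. a null direction lying along a coordinate axis, or $a(u) = 0$ so that the naive slope parametrization $dy/dx$ breaks down) and confirm the sign assignment is independent of the point $u$ chosen in a connected component of $\Omega$; continuity of $\lambda$ as a $\z_2$-valued function on $\G$ and the fact that the computed formula $\det(\gamma)\eta(\gamma)$ is manifestly a homomorphism take care of consistency across components. Once the single-point sign is pinned down, the equality $\lambda(\gamma) = \det(\gamma)\,\eta(\gamma)$ for all $\gamma \in \G$ follows.
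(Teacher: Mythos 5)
Your setup is essentially the paper's: work on $\Omega$, use the matrix form of equivariance $B(\gamma u)=\eta(\gamma)\,\gamma B(u)\gamma^{t}$, observe that $\delta$ is invariant, and decide which permutation of $\{{\mathscr F}_1,{\mathscr F}_2\}$ the element $\gamma$ induces by comparing the pushed-forward pair $\gamma X_i(u)$ with the pair $X_j(\gamma u)$ computed from the transformed coefficients. Where you diverge is in how the decisive sign is extracted, and this is also where your write-up stops being a proof and becomes a promissory note (``I expect this to come out as\dots''). The paper closes this step with a two-line determinant comparison that you should adopt, since it makes your normal-form reduction and the rotation/reflection case split unnecessary: form $M_1=\bigl(\gamma X_1(u)\mid \gamma X_2(u)\bigr)$ and $M_2=\bigl(X_1(\gamma u)\mid X_2(\gamma u)\bigr)$. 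A direct computation gives $\det\bigl(X_1(u)\mid X_2(u)\bigr)=2a\sqrt{\delta}$, hence $\det M_1=\det(\gamma)\,2a\sqrt{\delta}$, while the equivariance relation gives $X_i(\gamma u)=(\eta a,-\eta b+(-1)^i\sqrt{\delta})$ and so $\det M_2=\eta(\gamma)\,2a\sqrt{\delta}$. On the other hand the columns of $M_1$ are, up to a common factor $\alpha=\pm1$ (orthogonality), the columns of $M_2$ permuted according to $\lambda(\gamma)$, so $\det M_1=\lambda(\gamma)\det M_2$; at any point with $a\sqrt{\delta}\neq 0$ this yields $\det(\gamma)=\eta(\gamma)\lambda(\gamma)$, i.e.\ the claimed identity. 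The conceptual content your plan gestures at --- that multiplying $B$ by $-1$ swaps the labels of the two null directions, and that a reflection does the same because the labelling by $\pm\sqrt{\delta}$ is orientation-sensitive --- is exactly what this determinant encodes, since $X_1\wedge X_2=2a\sqrt{\delta}$ carries a definite sign.

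Two smaller points. First, your worry about $a(u)=0$ is legitimate and applies equally to the paper's argument: the parametrization $X_i=(a,-b+(-1)^i\sqrt{\delta})$ degenerates there, so one should either work at points where $a\neq 0$ (which exist in each component of $\Omega$ unless $a\equiv 0$, in which case use the companion parametrization $(-b+(-1)^i\sqrt{\delta},\,c)$ and the same determinant argument with $c$ in place of $a$) or invoke continuity of the $\z_2$-valued comparison. Second, your appeal to continuity of $\lambda$ across components of $\Omega$ and to the homomorphism property of $\det\cdot\,\eta$ is a reasonable way to globalize, but it is not needed once the pointwise identity is established at a single suitable point, since $\lambda(\gamma)$ is by definition a single sign per group element.
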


\begin{proof}
	At  $(x,y) \in \Omega$ consider the pair of tangent vectors given in (\ref{eq:vector fields}),
	$$  X_i(x,y)=(a(x,y), -b(x,y) + (-1)^i\sqrt{\delta(x,y)}), \ i=1,2.$$
	From the definition of the action $\G$ on $TM$,  the pair of tangent vectors to the two solution curves at $\gamma (x,y)$ is given by $\gamma X_i(x,y)$, $i=1,2$. On the other hand, from the equivariance of $\omega$ under $\Geta$, the vectors
	$$X_i(\gamma(x,y))=(\eta a(x,y), -\eta b(x,y) + (-1)^i \sqrt{\delta(x,y)}), \ i=1,2,$$
	are also tangent vectors to the two solution curves at $\gamma (x,y)$. By symmetry it follows that these two pairs must be parallel,  i.e., there exists a nonzero $\alpha$ such that, for $i \neq j \in \{1,2\},$
	\eq \label{igualdades}
	\gamma X_i (x,y)= \left \{ \begin{array}{ll}
		\alpha X_i(\gamma(x,y)),  &  \hbox{if} \   \lambda(\gamma))=1  \\
		\alpha X_j(\gamma(x,y)),  &  \hbox{if} \   \lambda(\gamma))=-1.
	\end{array} \right.
	\feq
	Also, by the orthogonality of the action, we have $\alpha = \pm1$.
	Now, consider the two matrices $M_1$ and $M_2$ whose columns are  the vectors $\gamma X_1, \gamma X_2$ and $X_1\gamma, X_2\gamma$  both calculated at $(x,y)$, respectively, that is,
	$$M_1=\left(
	\begin{array}{cc}
	\gamma X_1 & \gamma X_2 \\
	\end{array}
	\right),
	\ M_2=\left(
	\begin{array}{cc}
	\eta(\gamma)a &  \eta(\gamma)a\\
	-\eta(\gamma)b- \sqrt{\delta}& -\eta(\gamma)b+ \sqrt{\delta} \\
	\end{array}
	\right)
	$$
	From  (\ref{igualdades}) it follows that
	\[  \det(M_1)= \alpha^2 \lambda(\gamma) \det(M_2)  =  \lambda(\gamma) \det(M_2). \]
	Finally,  $\det(M_1) = \det(\gamma)  \ 2a\sqrt{\delta} $, and $\det(M_2)= \eta(\gamma) 2 a \sqrt{\delta}$. Hence,
	\[  \det(\gamma) = \eta(\gamma) \lambda(\gamma), \]
	which implies the result since these are all group homomorphisms $\G \to \z_2=\{\pm 1\} $.
\end{proof}

\begin{remark}
	In practice, Theorem $\ref{thm: homomorphisms formula}$ adds information to the inclusion $(\ref{eq: inclusion})$ when detecting the symmetry group  $\G_{\eta}$ of a BDE. In fact,  it provides the construction of the  homomorphism $\eta$  by the geometrical investigation of whether each element $\gamma \in \G_\eta$ preserves the foliations ($\lambda(\gamma) = 1$) or interchanges the foliations  ($\lambda(\gamma) = -1$).  To illustrate, consider the pictures in Fig.~1. In $(a)$ and $(b)$, foliations are interchanged by $\kappa_y$, whereas they are preserved by $\kappa_x$, and now we use $\det(\kappa_x) = \det(\kappa_y) = -1$ to conclude by Theorem $\ref{thm: homomorphisms formula}$ that  $\eta(\kappa_x) =  - \eta(\kappa_y) = -1$.  These are the generators of the symmetry group $\z_2 \times \z_2$, and  so the homomorphism $\eta$ is well-established for these examples. In $(c)$ foliations are interchanged by $\kappa_y$ and  rotation of $\pi/3$; since these are orientation reserving and orientation preserving respectively, it follows that   $\eta$ assumes $1$ and $-1$, respectively. These are the generators of the symmetry group 
	${\bf D}_6$, and so the homomorphism $\eta$ is well-established. 
\end{remark}

The following property is direct from Theorem \ref{thm: homomorphisms formula}:
\begin{corollary} \label{cor: homo formula}
	$\ker \lambda \cap \ker \eta $ is a cyclic group of $\Gamma$.
\end{corollary}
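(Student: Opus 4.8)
The plan is to combine Theorem~\ref{thm: homomorphisms formula} with the classification of closed subgroups of $\SO(2)$. Write $K=\ker\lambda\cap\ker\eta$.

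First I would observe that for every $\gamma\in K$ one has $\eta(\gamma)=\lambda(\gamma)=1$, so Theorem~\ref{thm: homomorphisms formula} gives $\det(\gamma)=\eta(\gamma)\,\lambda(\gamma)=1$. Hence $\gamma\in\SO(2)$, and therefore
\[ K \ \leq \ \G\cap\SO(2). \]
Next I would note that $K$ is a closed subgroup of the compact Lie group $\G$: since $\lambda$ and $\eta$ are homomorphisms onto the discrete group $\z_2$, each kernel contains the identity component $\G_0$ and is thus a union of connected components of $\G$, in particular closed; so $K$ is itself a compact Lie group, and by the inclusion above it is a closed subgroup of the circle $\SO(2)$.

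Finally I would invoke the fact that every closed subgroup of $\SO(2)\cong S^1$ is either a finite cyclic group $\z_n$, generated by the rotation through $2\pi/n$, or the whole group $\SO(2)$; in the terminology of this paper these are precisely the \emph{cyclic} subgroups of $\On(2)$, i.e.\ the ones containing no reflection. This yields the statement. The only subtlety — and the single place where one must be careful — is the reading of the word ``cyclic'' in the borderline case $K=\SO(2)$: the assertion there is not that $K$ is singly generated but that $K\leq\SO(2)$, equivalently that $K$ is free of reflections. Apart from fixing this convention the argument above is already complete, the remaining ingredients (that $\lambda$ and $\eta$ are genuine group homomorphisms $\G\to\z_2$) having been established in Section~\ref{sec: symmetry action} and in the proof of Theorem~\ref{thm: homomorphisms formula}.
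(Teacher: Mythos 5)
Your proposal is correct and is essentially the argument the paper intends: the paper gives no explicit proof, calling the corollary ``direct from Theorem~\ref{thm: homomorphisms formula}'', and the direct route is exactly yours --- for $\gamma$ in both kernels the theorem forces $\det(\gamma)=1$, so the intersection is a closed subgroup of $\SO(2)$ and hence cyclic (or all of $\SO(2)$). Your remark on the reading of ``cyclic'' in the borderline case $K=\SO(2)$ is a fair clarification of the paper's loose phrasing, not a deviation from its proof.
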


A  quadratic differential form (\ref{eq:1-form})  is associated to the matrix-valued mapping $B: \mathbb{R}^2 \rightarrow Sym_2,$
\eq  \label{forma quadratica} B(x,y)=\left(
\begin{array}{cc}
	c(x,y) & b(x,y) \\
	b(x,y) & a(x,y) \\
\end{array}
\right), \feq
where $Sym_2$ denotes the set of  symmetric matrices of order 2. Then  (\ref{eq:1-form}) can be written as
\[ \omega \ = \ \left(
\begin{array}{c}
dx\\
dy\\
\end{array}
\right)^t  \ B(x,y) \  \left(
\begin{array}{c}
dx \\
dy \\
\end{array}
\right), \]
where superscript $t$ denotes transposition. From (\ref{eq:equivariant form}), it follows that   symmetries of (\ref{BDE}) are given by the following equivariance condition of $B$,  with the action on the target  given by the homomorphism $\eta$ and conjugacy:
\begin{equation}  
B\bigl(\gamma (x,y)\bigr) = \eta(\gamma)  \gamma B(x,y) \gamma^t,  \ \forall \ \gamma \in \Gamma. \label{relacao}
\end{equation}

\begin{remark} \label{rmk: 1-forms}
	Although  Definition $\ref{def:equivariant}$  is given here for quadratic 1-forms, the equivariance condition $(\ref{eq:equivariant form})$  can be given for general 1-forms. For the linear case, it follows that 
	\[ \alpha(x,y, dx, dy) = A(x,y) dy + B(x,y) dx \]
	is $\G_\eta$-equivariant if, and only if, the associated planar mapping $F = (A, B)$ is 
	$\G_{\eta}$-equivariant in the sense of Definition $\ref{def:inv and equiv}$  below. For an investigation of pairs of symmetric linear 1-forms and their connection with symmetries of an associated BDE we refer to \mbox{\cite{ManTemp1}}. 
\end{remark}

From now on we shall use the matricial notation (\ref{relacao}) to investigate symmetries in BDEs. For a given $\G_\eta$, the aim is to find the general form of symmetric matrices (\ref{forma quadratica}) satisfying (\ref{relacao}). This is an algebraic problem which is treated in  Section \ref{sec: general forms}. \\

\section{\label{sec: inv theory}Invariant theory for the group $\Geta$}

The aim of this section is to obtain generalizations of some results of \cite{patricia} to be applied to a systematic study of symmetries in BDEs. The idea is to use group representation theory in the study of symmetries in the space of quadratic forms.  In Subsection~\ref{subsec:generalization} we register the generalization of results of \cite{patricia} to $\Geta$-equivariant mappings $V \to W$ (possibly distinct source and target), whose proofs adapt straightforwardly from \cite{patricia}.
In Subsection~\ref{subseq: equiv quadratic forms} we restrict the results of the preceding subsection to equivariant mappings ${\R}^n \to Sym_n$, where $Sym_n$ denotes the space of symmetric matrices of order $n$. These are then used in  Section \ref{sec: general forms} in the study of symmetries in binary differential equations.

\subsection{$\G_{\eta}$-equivariant mappings} \label{subsec:generalization}

We start with a brief summary about  algebraic invariant theory of compact Lie groups. Throughout we consider a compact Lie group $\G$ acting linearly on a real vector space $V$ of finite dimension $n$. There exists a $\G$-invariant  inner product on $V$ under which the representation $(\rho, V)$ associated with the given action of $\G$ is orthogonal, {\it i.e.}  for $\gamma \in \G$, $\rho(\gamma) \in \On(n)$, the group of orthogonal matrices of order $n$ (\cite[XII, Proposition 1.3]{golubitsky}). Hence, Lie groups in this paper are the closed subgroups of $\On(n)$.
A real function $f:V \rightarrow \R$ is {\it $\G$-invariant} if
$$\label{fc invariant} f(\rho(\gamma)x)= f(x), \ \ \forall \ \gamma \in \G, \ \forall \ x \in V. $$

The set $\pg$ of $\G$-invariant polynomials is a ring over $\R$. A finite set $\{u_1, ... u_s\}$ of $\G$-invariants generating this ring is called a { \it Hilbert basis} for $\pg.$ The existence of a Hilbert basis was proved by Weyl in $1946$, and Schwarz proved in $1975$ that the same set generates the ring of $C^{\infty}$ $\G$-invariant germs (see \cite{golubitsky}).

For $(\rho, V)$ and $(\nu, W)$ representations of $\G,$  a mapping $g: V \rightarrow W$ is  {\it $\G$-equivariant} if
$$ \label{apl equivariante} g(\rho(\gamma))=\nu(\gamma)g(x), \ \ \forall \ \gamma \in \G, \  \forall x \in V. $$

The set $\pvwflexa$  of $\G$-equivariant mappings $V \to W$ with polynomial entries is a module over $\pg$. Po\'enaru in 1976 proved that $\pvwflexa$ is finitely generated over the ring $\pg$ and  that the same set generates the module of equivariant  $C^{\infty}$~germs over the ring of invariant germs.

We shall also consider a one-dimensional representation of $\G$,
\eq \label{homom eta} \eta: \Gamma \rightarrow \z_2=\{\pm 1\}. \feq

This is a group homomorphism with $\Gmais = \ker \eta$  a normal subgroup of $\G$ of index $2$ if $\eta$ is nontrivial. From that, we also have the so-called  $\eta$-{\it dual representation} of $(\nu, W)$, denoted by $\nu_\eta$, defined by the product
$$ \label{dual} \gamma \mapsto \nu_\eta(\gamma) = \eta(\gamma) \nu(\gamma).$$


\begin{definition} \label{def:inv and equiv}
	For  $\eta$ as in $(\ref{homom eta})$ and $(\rho, V)$ and $(\nu, W)$ representations of $\G$,  a function $f: V \rightarrow \R$ is  $\G_{\eta}$-invariant if
	$$\label{fc eta invariant} f(\rho(\gamma)x)=\eta(\gamma)f(x), \ \ \forall \ \gamma \in \G, \ \forall \ x \in V. $$
	A mapping $g:V \rightarrow W$ is  $\G_{\eta}$-equivariant if
	$$\label{ap eta eqvariant} g(\rho(\gamma)x)=\eta(\gamma)\nu(\gamma)g(x), \ \ \forall \ \gamma \in \G, \ \forall \ x \in V. $$
\end{definition}
We shall denote
$\qvwflexa$ and $\qg$ the sets of $\G_{\eta}$-equivariant polynomial mappings and $\G_{\eta}$-invariant polynomial functions, respectively.  A $\G_{\eta}$-equivariant is an equivariant $(\rho, V) \to (\nu_{\eta}, W)$, and  a $\G_{\eta}$-invariant  is an equivariant $(\rho, V) \to (\eta, \R)$, so the finitude of generators for each as $\pg$-modules follows by Po\'enaru's theorem. It also follows from this theorem that it is no loss of generality to restrict to modules of polynomials when finding generators of $C^\infty$ mappings. In Section~\ref{sec: general forms} we shall see how configurations change when the symmetry group of binary differential equations changes to its dual representation. \\

A connection is established in \cite{patricia}  between the invariant theory for $\G$ and  $\Gmais$. This is done  through  an algebraic algorithm to compute generators of  $\qvwflexa$ from the knowledge of generators of  $\pvwflexamais$, when source and target spaces are the same. In Proposition~\ref{prop: generalization}  and Algorithm~\ref{alg} we  generalize this,  with  a similar algorithm to compute generators of $\Geta$-equivariants with possibly distinct source and target.

We follow the notation used in \cite{patricia} to introduce the Reynolds operators  $R:~\pgmais ~\rightarrow~\pgmais$ and $\Rflexa : \pvwflexamais \rightarrow \pvwflexamais$,
$$\label{reynods r e s} R(f)(x)=1/2(f(x) + f(\rho(\delta) x))  \ \ \mbox{e}  \ \  \overrightarrow{R}(g)(x) =1/2(g(x) + \nu(\delta)^{-1}g(\rho(\delta) x)), $$
and, the $\eta$-Reynolds operators on $\pgmais$ and on $\pvwflexamais$, $S: \pgmais \rightarrow \pgmais $ and $\Sflexa : \pvwflexamais \rightarrow \pvwflexamais$,
$$ {S}(f)(x)=1/2(f(x) - f( \rho( \delta ) x)) \ \ \mbox{e} \ \ \overrightarrow{S} (g)(x)= 1/2 (g(x) - \nu(\delta)^{-1}g(\rho(\delta) x)), $$
for an arbitrary fixed $\delta \in \G \setminus \G_{+}$.

Let us denote by $I_{\pgmais}$ and $I_{ \small \pvwflexamais}$ the identity maps on $\pgmais$ and on $\pvwflexamais$, respectively.
\begin{proposition} \label{prop: generalization} The operators above satisfy the following:
	\begin{description}
		\item[(a)] They are homomorphisms of $\pg$-modules and
		$$ R + S = I_{\pgmais} \ \ \ \mbox{and} \ \ \  \Rflexa + \Sflexa= I_{ \small \pvwflexamais}.$$
		\item[(b)] They are idempotent projections and the following direct sum decompositions of $\pg$-modules hold:
		\eq \label{decomposition} \pgmais = \pg \oplus \qg \ \ \ \mbox{and} \ \ \ \pvwflexamais = \pvwflexa \oplus \qvwflexa. \feq
	\end{description}
\end{proposition}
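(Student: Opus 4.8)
The plan is to verify both parts by direct computation with the four operators, exploiting the fact that $\G_+ = \ker\eta$ has index $2$ in $\G$ and that for the fixed $\delta \in \G \setminus \G_+$ we have $\eta(\delta) = -1$ and $\delta^2 \in \G_+$. I would begin with part (a). That $R, S, \Rflexa, \Sflexa$ are $\pg$-module homomorphisms is immediate: each is additive, and for $h \in \pg$ (a $\G$-invariant, hence in particular $\G_+$-invariant) one checks $R(hf)(x) = \tfrac12\bigl(h(x)f(x) + h(\rho(\delta)x)f(\rho(\delta)x)\bigr) = h(x)R(f)(x)$ using $h(\rho(\delta)x) = h(x)$, and analogously for the other three (for $\Rflexa$ and $\Sflexa$ one also uses that $\nu(\delta)^{-1}$ is linear so it commutes with scalar multiplication by $h(x)$). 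The identities $R + S = I_{\pgmais}$ and $\Rflexa + \Sflexa = I_{\pvwflexamais}$ are literally the definitions added together, so part (a) is routine.

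For part (b), I would first confirm that the four operators genuinely map $\pgmais$ and $\pvwflexamais$ into themselves, then compute the squares. For idempotency of $R$: $R^2(f)(x) = \tfrac12\bigl(R(f)(x) + R(f)(\rho(\delta)x)\bigr)$, and since $R(f)$ is already $\rho(\delta)$-invariant (because $R(f)(\rho(\delta)x) = \tfrac12(f(\rho(\delta)x) + f(\rho(\delta^2)x)) = \tfrac12(f(\rho(\delta)x)+f(x)) = R(f)(x)$, using $\delta^2 \in \G_+$ and $f$ being $\G_+$-invariant), we get $R^2 = R$. Similarly $S^2(f)(x) = \tfrac12\bigl(S(f)(x) - S(f)(\rho(\delta)x)\bigr)$ with $S(f)(\rho(\delta)x) = -S(f)(x)$, so $S^2 = S$. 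The equivariant versions $\Rflexa$ and $\Sflexa$ work the same way, now carrying the factor $\nu(\delta)^{-1}$ through and using $\nu(\delta)^{-2}\nu(\delta^2)^{? } $-type bookkeeping, i.e. $\nu(\delta^2) = \nu(\delta)^2$ so the double-application telescopes. From $R^2 = R$, $S^2 = S$, and $R + S = I$ one deduces $RS = SR = 0$ formally, and then the direct sum decomposition $\pgmais = \mathrm{im}\,R \oplus \mathrm{im}\,S$ follows as a standard consequence of having complementary idempotents.

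The remaining content of (b) is the identification of the images: $\mathrm{im}\,R = \pg$ and $\mathrm{im}\,S = \qg$, and likewise $\mathrm{im}\,\Rflexa = \pvwflexa$, $\mathrm{im}\,\Sflexa = \qvwflexa$. For this I would argue both inclusions. If $f \in \pg$ then $f$ is $\rho(\delta)$-invariant so $R(f) = f$ and $S(f) = 0$, giving $\pg \subseteq \mathrm{im}\,R$; conversely any $R(f)$ is $\G_+$-invariant (it lies in $\pgmais$) and $\rho(\delta)$-invariant, and since $\G$ is generated by $\G_+$ together with $\delta$, it is $\G$-invariant, so $\mathrm{im}\,R \subseteq \pg$. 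For $S$: if $f \in \qg$ then $f(\rho(\delta)x) = \eta(\delta)f(x) = -f(x)$, so $S(f) = f$; conversely $S(f) \in \pgmais$ satisfies $S(f)(\rho(\delta)x) = -S(f)(x) = \eta(\delta)S(f)(x)$, and combined with $\G_+$-invariance and the generation of $\G$ by $\G_+$ and $\delta$ this yields the $\G_\eta$-invariance condition $S(f)(\rho(\gamma)x) = \eta(\gamma)S(f)(x)$ for all $\gamma$. The equivariant cases are identical after inserting $\nu$ (resp. $\nu_\eta$) in the appropriate places. The one step that needs genuine care rather than bookkeeping is precisely this last reduction — checking that invariance/equivariance under $\G_+$ plus the correct transformation law under the single coset representative $\delta$ upgrades to the full $\G$-statement — which is where the coset structure $\G = \G_+ \sqcup \delta\G_+$ and the homomorphism property of $\eta$ are used; everything else is mechanical verification of idempotence and complementarity.
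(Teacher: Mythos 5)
Your proposal is correct, and it is essentially the paper's approach: the authors give no argument beyond citing the analogous Propositions 2.3 and 2.4 of \cite{patricia}, and your direct verification (module-homomorphism property, idempotence via $\delta^2\in\G_+$, identification of the images, and the upgrade from the $\G_+$-law plus the $\delta$-law to the full $\G$-law using $\G=\G_+\sqcup\delta\G_+$) is precisely the standard index-two Reynolds-operator argument that citation refers to. The only point worth making explicit, which you flag but should actually carry out, is that $f\circ\rho(\delta)$ remains $\G_+$-invariant (resp.\ equivariant) because $\G_+$ is normal in $\G$; with that, every step you describe goes through.
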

\begin{proof}
	 Analogous to Propositions 2.3 and  2.4 in \cite{patricia}. 
\end{proof}

The algorithm is based on the decompositions  (\ref{decomposition}) and on the projection operators  $S$ and $\Sflexa$ applied to  a given Hilbert basis of $\pgmais$ and a set of generators of $\pvwflexamais$. The  procedure is:
\begin{algthm} \label{alg}
	\label{algoritmo das apl geta equivariantes}
	Let $\G$ be a closed subgroup of $\On(n)$ and  $\eta: \Gamma \rightarrow \mathbf{Z}_2$ a homomorphism with $\ker \eta =\Gamma_{+}$,  $\{ u_1,...,u_s\} $  a Hilbert  basis of $\pgmais$ and $\{H_0,...,H_r\} $ a  generator set of $\pvwflexamais$ as a $\pgmais$-module;
	\begin{description}
		\item[1]  Fix $\delta \in \Gamma \setminus \Gamma_{+}$ arbitrary;
		\item[2]  For $i \in \{1, ..., s\}$, do $\tilde{u}_i=S(u_i), \tilde{u_0}:=1;$
		\item[3]  For $i \in \{1, ..., s\}$ and $j \in \{0, ..., r\}$, do $H_{ij}=\tilde{u}_i H_j;$
		\item[4]  For $i \in \{1, ..., s\}$ and $j \in \{0, ..., r\}$, do $\tilde{H}_{ij}=\Sflexa(H_{ij})$.
	\end{description}
	{\bf Result:}  $\{\tilde{H}_{ij}: \ 0 \leq i \leq s, \: 0 \leq j \leq r\}$ is a generator set of $\qvwflexa$ as a $\pg$-module.
\end{algthm}

As proved in \cite{patricia},  step 2 above provides a generator set of the $\pg$-module  $\qg$ (these are the anti-invariants in that paper). What we also remark at this point is that replacing $\Sflexa$ by the projection operator  $\Rflexa$ in step 4 we obtain, as expected, a direct way to compute a set of generators for the equivariants under the whole group $\G$ from the knowledge of equivariants under the subgroup $\Gmais$. This is formalized below:

\begin{proposition} \label{prop rflexa} Let $\G$ be a compact Lie group acting on $V$ and on $W$ and $\{H_{ij}=\tilde{u}_i H_j, 0 \leq i \leq s , 0\leq j \leq r\}$ a generator set of $\pvwflexamais$ as a $\pg$-module given by step $3$ in Algorithm~$\ref{algoritmo das apl geta equivariantes}$).   Then
	$$\{\Rflexa(H_{i,j}),0 \leq i \leq s , 0\leq j \leq r \}$$
	generates $\pvwflexa$ as a $\pg$-module.
\end{proposition}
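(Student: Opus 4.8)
The plan is to exploit the direct sum decomposition $\pvwflexamais = \pvwflexa \oplus \qvwflexa$ of Proposition~\ref{prop: generalization}(b), together with the fact that $\Rflexa$ and $\Sflexa$ are the associated idempotent projections summing to the identity (part (a)). The key observation is that the set $\{H_{ij}=\tilde u_i H_j : 0\le i\le s,\ 0\le j\le r\}$ is, by construction in step 3 of Algorithm~\ref{algoritmo das apl geta equivariantes}, a generating set of the full module $\pvwflexamais$ over $\pg$ — not merely of the $\Geta$-part. Indeed each $\tilde u_i = S(u_i)\in\qg$ and each $H_j\in\pvwflexamais$, and one checks readily that $\tilde u_i H_j$ lies in $\pvwflexamais$; that these products generate $\pvwflexamais$ over $\pg$ is precisely what underlies the correctness of the algorithm, since applying $\Sflexa$ to them yields generators of $\qvwflexa$ and the decomposition is over $\pg$.

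Granting that $\{H_{ij}\}$ generates $\pvwflexamais$ over $\pg$, the argument is short. Take any $g\in\pvwflexa$. Since $\pvwflexa\subseteq\pvwflexamais$, write $g=\sum_{i,j} p_{ij} H_{ij}$ with $p_{ij}\in\pg$. Apply $\Rflexa$: because $\Rflexa$ is a homomorphism of $\pg$-modules and fixes $\pvwflexa$ pointwise (it is the projection onto $\pvwflexa$ along $\qvwflexa$, so $\Rflexa(g)=g$ for $g\in\pvwflexa$), we get
\[
g \ = \ \Rflexa(g) \ = \ \Rflexa\Bigl(\sum_{i,j} p_{ij} H_{ij}\Bigr) \ = \ \sum_{i,j} p_{ij}\,\Rflexa(H_{ij}).
\]
Hence $g$ lies in the $\pg$-module generated by $\{\Rflexa(H_{ij})\}$. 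Conversely each $\Rflexa(H_{ij})$ lies in $\pvwflexa$ since $\Rflexa$ maps into its image $\pvwflexa$. Therefore $\{\Rflexa(H_{ij}):0\le i\le s,\ 0\le j\le r\}$ generates $\pvwflexa$ as a $\pg$-module.

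The main obstacle — or rather the one point that deserves care — is justifying that step 3 genuinely produces a generating set of the \emph{whole} module $\pvwflexamais$ over $\pg$, rather than of one summand. This is implicit in \cite{patricia} for the equal source-and-target case and in the statement of Algorithm~\ref{algoritmo das apl geta equivariantes}, so in the write-up I would either cite it or include the short verification: using $R+S=I$ on $\pg$ one has $\pgmais = R(\pgmais)+S(\pgmais) = \pg + \qg$, and combining a Hilbert basis of $\pgmais$ with a $\pgmais$-generating set $\{H_j\}$ of $\pvwflexamais$, the monomials $u^\alpha H_j$ generate $\pvwflexamais$ over $\pg$; rewriting each $u_i = R(u_i)+S(u_i)$ with $R(u_i)\in\pg$ and $S(u_i)=\tilde u_i$, and expanding, shows the products $\tilde u_i H_j$ (with $\tilde u_0:=1$) together with $\pg$-coefficients already span $\pvwflexamais$. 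Once this is in place the proposition follows immediately by the projection argument above, exactly parallel to how step 4 of the algorithm yields generators of $\qvwflexa$ via $\Sflexa$.
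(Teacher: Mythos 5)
Your proof is correct and follows essentially the same route as the paper's: express $g\in\pvwflexa\subseteq\pvwflexamais$ in terms of the $H_{ij}$ with coefficients in $\pg$, then apply the $\pg$-module homomorphism $\Rflexa$, which fixes $g$, to push the projection onto the generators. The extra discussion of why step 3 yields $\pg$-module generators of all of $\pvwflexamais$ is a reasonable precaution, but the paper simply takes this as a hypothesis of the proposition, so no further justification is required there.
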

\begin{proof}
	Let  $g \in \pvwflexa \subset \pvwflexamais$. Then
	$g=\sum_{i,j}^{s,r} p_{ij}H_{ij}, \ \ \ p_{ij} \in \pg,$  $0 \leq i \leq s$ e $0 \leq j \leq r$.
	Since $\Rflexa$ is a $\pg$-homomorphism and $\Rflexa(g)=g$, then $$g=\Rflexa(g)=\Rflexa\left(\sum_{i,j}^{s,r} p_{ij}H_{ij}\right)= \sum_{i,j}^{s,r} p_{ij} \Rflexa(H_{ij}).  $$
\end{proof}

\subsection{$\Geta$-equivariant quadratic forms} \label{subseq: equiv quadratic forms}

Let $\G$ be a closed subgroup of $\On(n)$ acting linearly on $\R^n$. In this subsection we just rewrite the Reynolds operators of Subsection~\ref{subsec:generalization} applied to the module of  $\Geta$-equivariant quadratic forms of ordem $n$.  In the present context, the representation on the target is defined from the representation on the source by conjugacy.

We consider the action of $\G$ on $Sym_n$, the space of symmetric matrices of order $n$, given by conjugacy,
$$\label{acao Symn}  (\gamma, B) \mapsto \gamma B \gamma^t, \ \gamma \in \G, \  B \in Sym_n . $$

\begin{definition}
	A quadratic form of order $n$ is a matrix-valued mapping $B:\R^n \rightarrow Sym_n$.
\end{definition}

Following the notation of Subsection~\ref{subsec:generalization},   $\pvwflexa$ is the module of the $\G$-equivariants
$$B (\gamma x)=\gamma B(x) \gamma^t, \ \ \forall \gamma \in \G, \ x \in \R^n\label{gamma equiv}, $$
and for $\eta: \G \rightarrow \z_2=\{\pm 1\}$ a nontrivial group homomorphism,    $\qvwflexa$ is the module of the $\Geta$-equivariants
$$ B(\gamma x)=\eta(\gamma) \gamma B(x) \gamma^t, \ \ \forall \gamma \in \G, \  x \in \R^n. \label{geta equiv} $$

The Reynolds operators
$\Rdflexa, \Sdflexa :\pgflexamais \rightarrow \pgflexamais,$ are now
$$\label{reynods r e s} \Rdflexa (g)(x) =1/2(g(x) + \delta^{-1}g(\delta x) \delta),  \ \
\Sdflexa(g)(x)= 1/2 (g(x) - \delta^{-1}g(\delta x)\delta), $$
for an arbitrary fixed $\delta \in \G \setminus \G_{+}$.

\section{\label{sec: general forms} General forms of  symmetric BDEs}

The aim of this section is to present the algebraic forms of  BDEs symmetric under the compact subgroups of ${\bf O}(2)$   with its standard action on the plane.  These  are derived from  generator sets of the modules of equivariant quadratic  forms on the plane
\eq \label{quadratic form} B(x,y)=\left(
\begin{array}{cc}
	c(x,y) & b(x,y) \\
	b(x,y) & a(x,y) \\
\end{array}
\right). \feq
These modules are  $\pvwflexa$ or $\qvwflexa$ of Subsection~\ref{subseq: equiv quadratic forms} for $n=2$. 
If the group homomorphism $\eta : \G \to \z_2$ is nontrivial we apply Algorithm~\ref{algoritmo das apl geta equivariantes}. For clarification of exposition, for each $\G$ and for each possible $\eta$  we shall denote  $\Geta$ by $\G[\ker \eta]$  when $\eta$ is nontrivial, and by $\G$ otherwise. We also denote $\bg^{\eta}(\G)$ by $\bg[\G, \ker \eta]$ and $\bgflexa^{\eta}(\G)$ by $\bgflexa[\G, \ker \eta]$.

For the computations below we shall use  the  action of (subgroups of) $\On(2)$ on $\R^2\simeq \C$ with the usual semi-direct product of $\SO(2)$ and $\z_2(\kappa)$,  using  complex coordinates,
	$$\label{acao padrao} \theta \cdot z=e^{i \theta}z,\ \ \hbox{and} \ \ \kappa z= \bar{z}, \   \ \theta \in [0, 2\pi], \  z \in \C.$$
In Subsections~\ref{subsec: znzn} and \ref{subsec: other zn} we derive the  general forms of symmetric BDEs under the cyclic group $\z_n$, $n \geq 3$,  and in Subsection~\ref{subsec:z2} the general forms under ${\z}_2$, for all  possible homomorphisms $\eta$. For the other compact subgroups of ${\bf O}(2)$  the computation is similar and shall be omitted. In Subsection \ref{subsection:table}  all general forms are given in Table~\ref{tabela forma gerais}.

\subsection{\label{subsec: znzn}$\z_n$-equivariant quadratic forms}

Here we consider the cyclic group $\z_n$, $n \geq 3$,  with $\eta$ trivial. We compute generators of $\pgflexa(\z_n)$ by computing generators of $\mathcal{M}(\z_n)$, the module of $\z_n$-equivariant matrix-valued mappings  $ \R^2 \to  M_2(\R^2)$, and  projecting onto the space of mappings  $\R^2 \to  Sym_2$ . In complex coordinates we write any element of $\mathcal{M}(\z_n)$ as
\eq z \mapsto \alpha(z) w + \beta(z) \bar{w}, \ \forall w \in {\R^2}, \label{matrizcomplexa} \feq
for functions  $\alpha=\alpha_1 +i \alpha_2$ and $\beta=\beta_1 + i\beta_2$,  with $\alpha_j, \beta_j, j=1,2$, real functions.   Associating it  with the real matrix
$$
M = \left(
\begin{array}{cc}
\alpha_1 + \beta_1 & \beta_2 - \alpha_2 \\
\alpha_2 + \alpha_2 & \alpha_1-\beta_1 \\
\end{array}
\right),
$$
the desired quadratic forms are obtained by the projection
\begin{equation}  \label{matrizsimetrica}
M \mapsto B = 1/2 (M + M^t),
\end{equation}
after imposing the $\z_n$-symmetry condition.
Write ($\ref{matrizcomplexa}$) as
$$ M(z)w=\sum\alpha_{jk}z^j\bar{z}^kw + \sum \beta_{jk}z^j\bar{z}^k\bar{w}, \ \ \ \ \alpha_{jk}, \beta_{jk} \in \C.$$
The equivariance  with respect to $\theta \in \z_n$ gives
\eq \label{1} M(z)w=\sum\alpha_{jk}e^{i\theta(j-k)}z^j\bar{z}^kw + \sum \beta_{jk} e^{i \theta(j-k-2)} z^j\bar{z}^k\bar{w}. \feq
So $\alpha_{jk}=\alpha_{jk}e^{i\theta(j-k)}$ and $\beta_{jk}=\beta_{jk}e^{i\theta(j-k-2)}$ for $\theta={2k\pi}/{n}, \ k =1, \ldots, n$, and  so
\eq \label{equiv 1}\alpha_{jk}=0 \ \mbox{ou} \  j\equiv k\mbox{(mod n)} \ \ \ \mbox{and} \ \ \ \beta_{jk}=0 \ \mbox{ou} \ \ j\equiv k+2\mbox{(mod n)}.\feq
A Hilbert basis for $\p(\z_n)$ is given in  \cite{symmetryinchaos},
$$\{ u_1 = z\bar{z}, u_2 =z^n + \bar{z}^n, u_3 =i(z^n -\bar{z}^n)\}.$$
Factor out $z\bar{z}$ in  (\ref{1})  and use (\ref{equiv 1}) to get
\begin{eqnarray*}
	M(z)w &=& \sum_{j\geq k} \alpha_{jk}(z\bar{z})^k z^{j-k}w + \sum_{j< k} \alpha_{j k} (z\bar{z})^j \bar{z}^{k-j}w  + \sum_{j\geq k} \beta_{jk}(z\bar{z})^k z^{j-k}\bar{w} +  \\ & + & \sum_{j < k} \beta_{jk} (z\bar{z})^j \bar{z}^{j-k}\bar{w} \  = \  \sum c^1_{kl_1}(z\bar{z})^k z^{l_1 n}w + \sum c^2_{jl_2} (z\bar{z})^j \bar{z}^{l_2 n}w + \\ &+&  \sum c^3_{k l_3}(z\bar{z})^k z^{l_3n+2}\bar{w} + \sum c^4_{jl_4} (z\bar{z})^j \bar{z}^{l_4 n -2}\bar{w},
\end{eqnarray*}
where, $c^t \in \C$, $l_t \in \mathbb{N}$, $t=1, ...,4$, \ $l_1, l_2, l_3 \geq0$ and $l_4\geq1$. We now use the  identities
\[ \begin{array}{lll}
z^{ln} &=& (z^n+\bar{z}^n)z^{(l-1)n} - (z \bar{z})^n z^{(l-2)n}\\
\bar{z}^{ln}&= &(z^n+\bar{z}^n)\bar{z}^{(l-1)n} - (z \bar{z})^n \bar{z}^{(l-2)n}\\
z^{ln +2}&= &(z^n+\bar{z}^n)z^{(l-1)n +2} - (z \bar{z})^n z^{(l-2)n+2} \\
\bar{z}^{ln-2}&= &(z^n+\bar{z}^n)\bar{z}^{(l-1)n-2} - (z \bar{z})^n \bar{z}^{(l-2)n-2} \\
\bar{z}^n&= & (z^n + \bar{z}^n)-z^n \\
z^{n+2}&= & (z^n+\bar{z}^n)z^2 - (z \bar{z})^2 \bar{z}^{n-2} \\
\bar{z}^{(l+1)n-2}&= & (z^{ln}+\bar{z}^{ln})\bar{z}^{n-2} - (z \bar{z})^{n-2}z^{(l-1)n+2}
\end{array} \]
to conclude that a set of generators of  $\mathcal{M}_2(\z_n)$ over $\p(\z_n)$ is given by the elements
$$ M_1(z)w=w, \  M_2(z)w=iw,  \ M_3(z)w=z^2\bar{w}, \ M_4(z)w=iz^2\bar{w}, \ M_5(z)w=\bar{z}^{n-2}\bar{w},$$ $$M_6(z)w=i\bar{z}^{n-2}\bar{w}, \ M_7(z)w=z^n w,  \ M_8(z)w=iz^n w.$$
We now apply  the projection  (\ref{matrizsimetrica}) to the elements above to find generators of $\pgflexa(\z_n)$,
$$B_{1}(z)w=w, \ B_3(z)w=z^2\bar{w}, \ B_4(z)w=iz^2\bar{w}, \ B_5(z)w=\bar{z}^{n-2}\bar{w}, \ B_6(z)w= i\bar{z}^{n-2}\bar{w}.$$

\subsection{$\z_n[\z_{n/2}]$-equivariant quadratic forms, for $n \geq 4$ even }
\label{subsec: other zn}

In this case,  $\ker \eta = \z_{n/2}$. From the preceding subsection we extract
$$H_0(z)w=w, H_1(z)w=z^2\bar{w}, H_2(z)w=iz^2\bar{w}, H_3(z)w=\bar{z}^{n/2-2}\bar{w}, H_4(z)w=i\bar{z}^{n/2-2}\bar{w}$$
as generators of $\pgflexa(\z_{n/2})$ over the ring $\p(\z_{n/2})$  whose  Hilbert basis  is
$$\{u_1(z)=z \bar{z}, u_2(z)=z^{n/2} + \bar{z}^{n/2}, u_3(z)=i(z^{n/2} -  \bar{z}^{n/2}) \}, $$
We now apply  Algorithm~\ref{algoritmo das apl geta equivariantes}:
\begin{enumerate}
	\item  Fix $\delta=e^{2 \pi i/n} \in \z_n \setminus \z_{n/2}$;
	\item Generators of $\bg[{\z_n, \z_{n/2}}]$ over $\p(\z_n)$:
	$$\tilde{u}_1(z)=S(u_1)(z)=\frac{1}{2}(z\bar{z}-(e^{{2\pi i}/n}z)(e^{-{2 \pi i}/n}\bar{z}))=0.$$
	$$\tilde{u}_2(z)=S(u_2)(z)=\frac{1}{2}(z^{n/2} + \bar{z}^{n/2} - (e^{\pi i}z^{n/2} + e^{-\pi i}\bar{z}^{n/2})=z^{n/2} + \bar{z}^{n/2}.$$
	$$\tilde{u}_3(z)=S(u_3)(z)=\frac{1}{2}(i(z^{n/2} - \bar{z}^{n/2}) - i((e^{\pi i}z^{n/2} - e^{-2\pi i}\bar{z}^{n/2})= i(z^{n/2} - \bar{z}^{n/2}).$$
	\item Generators of  $\pgflexa[\z_n, \z_{n/2}]$ over $\p(\z_n/2)$: set $\tilde{u}_0(z)=1$,
	$$H_{0j}(z)w=\tilde{u}_0(z)H_j(z)w=H_j(z)w, \ j=0, ...,4;$$
	$$H_{1j}(z)w=\tilde{u}_1(z)H_j(z)w=0, \ j=0, ..., 4;$$
	$$H_{20}(z)w=\tilde{u}_2(z)H_0(z)w=(z^{n/2} + \bar{z}^{n/2})w;$$
	$$H_{21}(z)w=\tilde{u}_2(z)H_1(z)w=(z^{n/2 +2} + (z\bar{z})^2 \bar{z}^{n/2-2})\bar{w};$$
	$$H_{22}(z)w=\tilde{u}_2(z)H_2(z)w=i (z^{n/2 +2} + (z\bar{z})^2 \bar{z}^{n/2-2})\bar{w};$$
	$$H_{23}(z)w=\tilde{u}_2(z)H_3(z)w=(\bar{z}^{n/2 -2} + (z\bar{z})^{n/2-2} z^2)\bar{w};$$
	$$H_{24}(z)w=\tilde{u}_2(z)H_4(z)w=i(\bar{z}^{n/2 -2} + (z\bar{z})^{n/2-2} z^2)\bar{w};$$
	$$H_{30}(z)w=\tilde{u}_3(z)H_0(z)w=i(z^{n/2}- \bar{z}^{n/2})w;$$
	$$H_{31}(z)w=\tilde{u}_3(z)H_1(z)w=i(z^{n/2 +2} -(z\bar{z})^2 \bar{z}^{n/2-2})\bar{w};$$
	$$H_{32}(z)w=\tilde{u}_3(z)H_2(z)w=(-z^{n/2 +2} +(z\bar{z})^2 \bar{z}^{n/2-2})\bar{w};$$
	$$H_{33}(z)w=\tilde{u}_3(z)H_3(z)w=i (-\bar{z}^{n-2} + (z\bar{z})^{n/2-2}z^2)\bar{w};$$
	$$H_{34}(z)w=\tilde{u}_3(z)H_4(z)w=(\bar{z}^{n-2} - (z\bar{z})^{n/2-2}z^2)\bar{w},$$
	which, as an intermediate step, we simplify to the reduced list
	$$ H_{00}(z)w=w, H_{01}(z)w=z^2\bar{w}, H_{02}(z)w=iz^2\bar{w}, H_{03}(z)w=\bar{z}^{n/2-2}\bar{w},$$
	$$ H_{04}(z)w=i\bar{z}^{n/2-2}\bar{w}, H_{20}(z)w=(z^{n/2} - \bar{z}^{n/2}) w, H_{21}(z)w=z^{n/2+2}\bar{w}, H_{22}(z)w=iz^{n/2+2}\bar{w},$$ 
	$$ H_{23}(z)w=\bar{z}^{n-2}\bar{w},  H_{24}(z)w=i\bar{z}^{n-2}\bar{w},
	H_{30}(z)w=i(z^{n/2} -\bar{z}^{n/2})w. $$
	
	\item  Generators  of $\bgflexa[\z_n, \z_{n/2}]$ over $\p(\z_n)$:
	$$\tilde{H}_{00}(z)w=\tilde{H}_{01}(z)w=\tilde{H}_{02}(z)w=0;$$
	$$\tilde{H}_{03}(z)w=\bar{z}^{n/2-2}\bar{w};$$
	$$\tilde{H}_{04}(z)w=i\bar{z}^{n/2-2}\bar{w};$$
	$$\tilde{H}_{20}(z)w=(z^{n/2} + \bar{z}^{n/2}) w;$$
	$$\tilde{H}_{21}(z)w=z^{n/2+2}\bar{w};$$
	$$\tilde{H}_{22}(z)w=iz^{n/2+2}\bar{w};$$
	$$\tilde{H}_{23}(z)w=\tilde{H}_{24}(z)w=0;$$
	$$\tilde{H}_{20}(z)w=i(z^{n/2} - \bar{z}^{n/2}) w.$$
\end{enumerate}
Therefore, $\bgflexa[\z_n, \z_{n/2}]$ is the $\p(\z_n)$-module generated by $$\tilde{B}_1(z)w=\bar{z}^{n/2-2}\bar{w}, \ \tilde{B}_2(z)w=i \bar{z}^{n/2 -2}\bar{w}, \ \tilde{B}_3(z)w=(z^{n/2} + \bar{z}^{n/2})w,$$
$$ \tilde{B}_4(z)w=z^{n/2+2}\bar{w}, \ \tilde{B}_5(z)w=iz^{n/2+2}\bar{w}, \ \tilde{B}_6(z)w=i(z^{n/2} - \bar{z}^{n/2}) w.$$

\subsection{${\z}_2$-equivariant quadratic forms} \label{subsec:z2}

Let ${\z}_2$ be the group generated by the reflection $\kappa_x$ on the $x$-axis. First we consider $\eta : {\z}_2(\kappa_x) \to \z_2$ trivial. Imposing the ${\z}_2$-equivariance to (\ref{quadratic form}) gives
$$ \label{eq: z2 equi} \left(
\begin{array}{cc}
c(x,-y) & b(x,-y) \\
b(x,-y) & a(x,-y) \\
\end{array}
\right)=\left(
\begin{array}{cc}
c(x,y) & -b(x,y) \\
-b(x,y) & a(x,y) \\
\end{array}
\right).
$$
This is to say that $a$ and $c$ are ${\z}_2[ {\bf 1}]$ -equivariant and $b$ is ${\z}_2$-anti-invariant. Therefore, the generators of $\pgflexa({\z}_2)$ under $\p({\z}_2)$ are
$$(x,y) \mapsto \left(
\begin{array}{cc}
1 & 0 \\
0 & 0 \\
\end{array} \right),  \ (x,y) \mapsto \left(
\begin{array}{cc}
0 & 0 \\
0 & 1 \\
\end{array}\right),  \ (x,y) \mapsto \left(
\begin{array}{cc}
0 & y \\
y & 0 \\
\end{array} \right).$$

Assume now $\eta$ nontrivial, so  $\ker \eta = {\bf 1}$.  Imposing the ${\z}_2[{\bf 1}]$-equivariance to (\ref{quadratic form}) gives
$$ \label{eq1: z2 equi} \left(
\begin{array}{cc}
c(x,-y) & b(x,-y) \\
b(x,-y) & a(x,-y) \\
\end{array}
\right)=\left(
\begin{array}{cc}
-c(x,y) & b(x,y) \\
b(x,y) & -a(x,y) \\
\end{array}
\right).
$$
Hence $b$ is ${\z}_2$-invariant and the functions $a$ and $c$ are ${\z_2}[{\bf 1}]$-equivariant. Therefore, the generators for $\pgflexa({\z_2},{\bf 1})$ under $\p({\z}_2)$ are
$$(x,y) \mapsto \left(
\begin{array}{cc}
0 & 1 \\
1 & 0 \\
\end{array} \right),  \ (x,y) \mapsto \left(
\begin{array}{cc}
y & 0 \\
0 & 0 \\
\end{array}\right),  \ (x,y) \mapsto \left(
\begin{array}{cc}
0 & 0 \\
0 & y \\
\end{array} \right).$$

\subsection{ \label{subsection:table}  Summarizing table and illustrations}
In this subsection we present  the general forms of symmetric  quadratic differential 1-forms $ \omega = a(x,y)dy^2+2b(x,y)dxdy +c(x,y)dy^2$ under compact subgroups of $\mathbf{O}(2)$. 
Table~1 shows each group $\G$ with all possible values of $\ker \eta$, denoted by $\Gamma[\ker \eta]$. Following the previous notation, when $\eta$ is trivial the group is denoted simply by $\Gamma$. Also, $\D_{n}(\kappa_x)$ and $\D_{n}(\kappa_y)$ shall denote  the dihedral groups generated by the rotation of angle $2 \pi/n$ and by the reflections with respect to the $x$-axis or $y$-axis, respectively.

In \cite{brucetari} the authors consider BDEs whose discriminant function $\delta=b^2 - ac$  is of Morse type. In this case, the discriminant set is a pair of transversal straight lines by the origin or the origin itself. They prove that these BDEs are topologically equivalent to their linear part. We remark that all the normal forms that they obtain must be equivariant under a finite symmetry group. In fact, it follows from Table~1 that there are no linear BDEs with infinite group of symmetries.  As it appears in \cite{brucetari},   the Morse condition is given in terms of the coefficients of the linear part of the smooth functions $a, b$ and $c$. More precisely, if we write $a=a_1x + a_2y + {\bf o}(2)$, $b=b_1x + b_2y + {\bf o}(2)$ and $c=c_1x + c_2y + {\bf o}(2)$, then the condition is  
\begin{equation}
\label{morse} (c_2a_1-c_1a_2)^2 -4(b_2a_1-b_1a_2)(c_2b_1-c_1b_2)\neq 0.
\end{equation}
From Table~1, the possible symmetry groups of BDEs whose linear parts satisfy (\ref{morse}) are
\begin{equation}  \label{eq: class 1}
\z_3, \ \z_6[\z_3], \ \D_3, \D_3[\z_3],  \ \D_6[\D_3]
\end{equation}
or  
\begin{equation} \label{eq: class 2}
\z_2, \   \z_2[{\bf 1}], \    \z_2\times \z_2[\z_2(\kappa_x)]. 
\end{equation}

Recall from Remark \ref{rmk: inclusion1} that the set of all  symmetries of a BDE is at most the symmetry group $\Sigma(\Delta)$ of the discriminant set. Hence, for the Morse cases it follows that if $\Delta$ is the origin, then  the possible  nontrivial symmetry groups  are the ones in (\ref{eq: class 1}), whereas the groups listed in (\ref{eq: class 2})  are the possible groups  when the discriminant set is a pair of transversal straight lines. We also point out that  the finiteness of the symmetry group  also holds for equations with constant coefficients. A classification of these two types of BDEs is done in \cite{ManTemp1}, including an analysis of the corresponding group of symmetries of the equation with possible number of invariant lines in the associated configuration.

\begin{remark}
	The symmetry group of the configuration shown in Fig.1$(c)$ is $\D_6[\D_3(\kappa_y)]$, whose quadratic form $(y, x, -y)$
	appears in Table $1$  by interchanging the variables $x$ and $y$ and taking $p_1 \equiv 1$ and $p_2 \equiv p_3 \equiv 0$ in the general form for the group $\D_6[\D_3(\kappa_x)].$ Similarly, the symmetry group of the configurations in Fig.~\ref{Figura 1}$(a)$ and $(b)$  is $\z_2 \times \z_2[\z_2(\kappa_y)]$, whose quadratic forms appear from the data for $\z_2\times \z_2[\z_2(\kappa_x)]$ in Table~1 by interchanging $x$ and $y$   and taking $p_1\equiv p_2 \equiv1, p_3\equiv -1$, and $p_1\equiv1, p_2 \equiv \frac{1}{4}, p_3 \equiv -1$, respectively.
\end{remark}

\begin{table}[!h] \label{table}
	\begin{center}
	\begin{tabular}{|c|c|l|}
		\hline
		$\Gamma[\ker \eta]$        & $\ker \lambda$          &  General form	  \\ \hline
		
		&              & $a=p_1 + (y^2-x^2)p_2 + 2xyp_3;$ \\
		$\SO(2)$ & $\SO(2)$  	& $b= 2xyp_2 + (x^2-y^2)p_3;$ \\ 
		&				& $c= p_1 + (x^2-y^2)p_2 - 2xyp_3,$ \\
		&         &      $  p_i \in \p(\SO(2)), i=1,2,3.$ \\ \hline
		
		$\On(2)$ &   $\SO(2)$   & $a= p_1 + (y^2-x^2)p_2; \ b=2xyp_2; $ \\ &   & $  c=p_1 + (x^2-y^2)p_2, \ p_i \in \p(\On(2)), i=1,2.$ \\ \hline
		
		                  &           & $a= 2xyp;$ \\
		$\On(2)[\SO(2)]$  & $\On(2)$  & $b= (x^2-y^2)p;$ \\ 
		                  &  & $  c= -2xyp, \p \in \p(\On(2)). $ \\ \hline			
		
		&  		 &	$a= p_1 + (y^2-x^2)p_2 + 2xyp_3 -A_1p_4 -A_2p_5;$ \\
		$\z_n, $ & $\z_n$    &  $b= 2xyp_2 + (x^2-y^2)p_3 + A_1p_5 -A_2p_4;$ \\ 
		  $n \geq 3$   &      &  $c= p_1 + (x^2-y^2)p_2 -2xyp_3 +A_1p_4 + A_2p_5, $ \\
		&           &  $ p_i \in \p(\z_n ), i=1,..., 5.$  \\ \hline
		
		&                    & $a= -A_3p_1 - A_4p_2 + A_5p_3 -A_7p_4 + A_8p_5 + A_6p_6;$ \\ 	                              
		$\z_n[\z_{n/2}], $ & $\z_{n/2}$ & $b= -A_4p_1 + A_3p_2 + A_8p_4 + A_7p_5; $ \\ 
		$ n \geq 4$ even &  & $c= A_3p_1 + A_4p_2 + A_5p_3 +A_7p_4 - A_8p_5 + A_6p_6,$ \\ 
		&                    & $ p_i \in \p(\z_n),  i=1, ..., 6.  $ \\ \hline

		&         &  $a= p_1 + (y^2-x^2)p_2 - A_1p_3;$ \\
		$\D_n,  $ & $\z_n$  &  $b= 2xyp_2 - A_2p_3; $ \\
		$n \geq 3 $ &   &  $c= p_1 + (x^2-y^2)p_2 + A_1p_3, \ p_i \in \p(\D_n), i=1,2,3.$ \\ \hline
		
		&        & $a= 2xyp_1 -A_2p_2 +A_9p_3;$ \\
		$\D_n[\z_n], $ & $\D_n$ & $b= (x^2-y^2)p_1 + A_1p_2; $ \\
		  $n \geq 3$      &        & $c=-2xyp_1 + A_2p_2 +A_9p_3,  \ p_i \in \p(\D_n), i=1, 2, 3.$ \\ \hline
		
		&                                & $a= -A_3p_1 +A_5p_2 -A_7p_3; $ \\
		$\D_n[\D_{n/2}(\kappa_x)], $ & $\D_{n/2}(\kappa_y)]$  & $b= -A_4p_1 +A_8p_3; $ \\ 
	    $ n \geq 4$ even	&     & $c= A_3p_1 +A_5p_2 + A_7p_3, \ p_i \in \p(\D_n), i=1, 2, 3.$ \\   \hline
		
		$\z_2$ & ${\bf 1}$ &  $a= p_1; \ b= yp_2; \ c=p_3, \ p_i \in \p(\z_2), i=1, 2,3.$ \\ \hline
		
		$\z_2[{\bf 1}]$ & $\z_2$ & $a= y p_1; \ b= p_2; \ c=yp_3, \  p_i \in \p(\z_2), i=1, 2, 3.$ \\ \hline
		
		$\z_2 \times \z_2$ & $\z_2(-I)$  & $a=p_1; \ b= xyp_2; \ c=p_3, \   p_i \in \p(\z_2 \times \z_2), i=1, 2, 3.$ \\ \hline
		
		$\z_2 \times \z_2[\z_2(-I)]$ & $\z_2 \times \z_2 $ & $a=xyp_1; \ b= p_2; \ c=xyp_3,  p_i \in \p(\z_2 \times \z_2), i=1, 2, 3.$ \\ \hline
		
		$\z_2 \times \z_2[\z_2(\kappa_x)]$ & $\z_2(\kappa_y)$ & $a= xp_1; \ b= yp_2; \  c=xp_3,  p_i \in \p(\z_2 \times \z_2), i=1, 2, 3.$ \\ \hline
			
			\multicolumn{3}{|l|}{$A_1= \hbox{Re}(z^{n-2}), \ A_2= \hbox{Im}(z^{n-2}), \ A_3= \hbox{Re}(z^{n/2-2}), \ A_4= \hbox{Im}(z^{n/2-2}), \ A_5= \hbox{Re}(z^{n/2}),$ } \\ \hline
			\multicolumn{3}{|l|}{$A_6= \hbox{Im}(z^{n/2}), \  A_7= \hbox{Re}(z^{n/2+2}), \ A_8= \hbox{Im}(z^{n/2+2}),  \ A_9= \hbox{Im}(z^n). $} \\ \hline
	
		\end{tabular}
	\end{center}
			\caption{General forms of equivariant quadratic differential forms on the plane under closed subgroups of {\bf O}$(2)$.}
			\label{tabela forma gerais}
\end{table}




We finish this paper with an example of each symmetry type  given in Table \ref{tabela forma gerais}. Let us point out that some of these configurations  can be realized for example as lines of curvatures or as asymptotic lines of surfaces immersed in $\R^3$  or $\R^4$, whereas some others cannot. This is an interesting issue in differential geometry that we have started to investigate in presence of symmetries. For the context without symmetry we cite \cite{hopf,livroronaldo,gutierrezsoto}.

\begin{figure}[!h]
	\centering
	\subfigure[$(1 +y^2 -x^2+2xy, x^2-y^2 +2xy, 1+x^2 -y^2 -2xy)$][\label{so2so2}]{
		\includegraphics[scale=0.5]{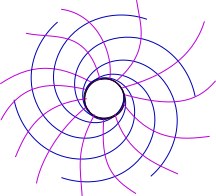}}
	\qquad 
	\subfigure[$(y^2-x^2, 2xy, x^2-y^2)$][\label{o2o2}]{
		\includegraphics[scale=0.5]{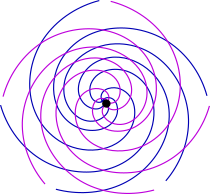}}
	\qquad 
	\subfigure[$(2xy, x^2-y^2, -2xy )$][\label{o2so2}]{
		\includegraphics[scale=0.5]{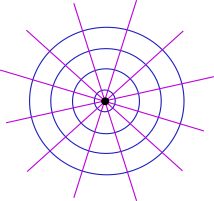}}
	\caption{Configurations with symmetry (a) $\mathbf{SO}(2)$, (b) $\mathbf{O}(2)$ and (c) $\mathbf{O}(2)[\mathbf{SO}(2)]$.}
	\label{fig01}
\end{figure}

For   $\mathbf{SO}(2)$, we choose  $p_1 \equiv p_2 \equiv p_3\equiv1$  in Table~\ref{tabela forma gerais}, so that the differential form is
$$(1 +y^2 -x^2+2xy, x^2-y^2 +2xy, 1+x^2 -y^2 -2xy).$$
The homomorphism $\lambda$ is trivial and the discriminant function is  $\mathbf{O}(2)$-invariant given by $$\delta(x,y)=2(x^2 +y^2)^2 -1.$$
This is illustrated in Fig.~\ref{so2so2}.

For  $\mathbf{O}(2)$, we choose $p_1\equiv0, p_2\equiv1$, so the differential form is
$$(y^2-x^2, 2xy, x^2-y^2).$$
The homomorphism $\lambda$ is such that  $\ker \lambda = \SO(2)$ and the discriminant function is $\mathbf{O}(2)$-invariant  given by $$\delta(x,y)=(x^2 +y^2)^2.$$
This is illustrated in Fig.~\ref{o2o2}.

The configuration in Fig.~\ref{o2so2} is  $\mathbf{O}(2)[\mathbf{SO}(2)]$-symmetric, whose quadratic form has been chosen by taking  $p\equiv1$ in Table~1, that is, $$(2xy, x^2-y^2, -2xy ).$$ The homomorphism $\lambda$ is trivial and the discriminant function is the  $\mathbf{O}(2)$-invariant given by $$\delta(x,y)=(x^2 +y^2)^2. $$

\begin{figure}[!h]
	\centering
	\subfigure[()][\label{znzn}]{
		\includegraphics[scale=0.5]{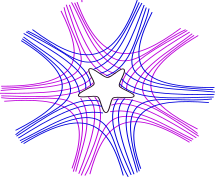}}
	\qquad 
	\subfigure[$()$][\label{znzn2}]{
		\includegraphics[scale=0.5]{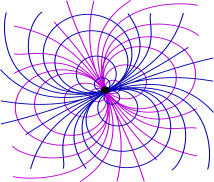}}
	\caption{Configurations with symmetry group given by (a) $\mathbf{Z}_5$ and (b) $\mathbf{Z}_4[\mathbf{Z}_2]$ .}
	\label{fig02}
\end{figure}

We now consider  $\mathbf{Z}_5$     taking  $p_1\equiv p_2 \equiv p_5 \equiv1$ and $p_3\equiv p_4 \equiv p_5 \equiv 0$ in Table~\ref{tabela forma gerais}, so that the differential form is
$$(1 +y^2 -x^2 -3x^2y +y^3, 2xy + x^3 -3xy^2, 1 + x^2 -y^2 +3x^2y -y^3).$$
The homomorphism  $\lambda$ is necessarily trivial. The discriminant function is the $\mathbf{Z}_5$-invariant given by
$$\delta(x,y)=(x^2 +y^2)^3 +10x^4y -20x^2y^3+2y^5 +(x^2 +y^2)^2 -1.$$
The picture for this case is shown in Fig. \ref{znzn}. The star shape of the discriminant set is in fact $\z_5$-symmetric without reflectional symmetries, as it is easily checked by direct calculation.

Fig.~\ref{znzn2} is a $\mathbf{Z}_{4}[\mathbf{Z}_{2}]$ case, considering  $p_1\equiv p_2\equiv p_4 \equiv p_5\equiv 1$ and $p_3 \equiv p_6 \equiv0 $ in Table~\ref{tabela forma gerais}, so that the differential form is
$$( -x^4 +6x^2y^2 -y^4+4x^3y -4xy^3,  x^4 -6x^2y^2 +y^4+4x^3y -4xy^3, x^4 -6x^2y^2 +y^4-4x^3y +4xy^3).$$
The homomorphism $\lambda $ must be  such that $\ker \lambda = \z_2.$ The discriminant set  is just the origin, given as the zero set of (the  $\mathbf{O}(2)$-invariant)
$$\delta(x,y)=2(x^2 +y^2)^4.$$

\begin{figure}[!h]
	\centering
	\subfigure[()][\label{dndn}]{
		\includegraphics[scale=0.5]{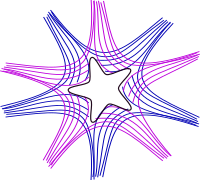}}
	\qquad 
	\subfigure[$(y^2-x^2, 2xy, x^2-y^2)$][\label{dnzn}]{
		\includegraphics[scale=0.5]{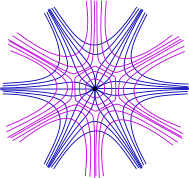}}
	\qquad 
	\subfigure[$(2xy, x^2-y^2, -2xy )$][\label{dndnd}]{
		\includegraphics[scale=0.5]{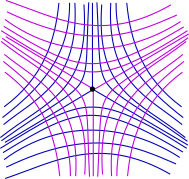}}
	\caption{Configurations with symmetry groups $\mathbf{D}_5$, $\mathbf{D}_6[\mathbf{Z}_6]$ and $\mathbf{D}_6[\mathbf{D}_3(\kappa_x)]$.}
	\label{fig03}
\end{figure}

For $\mathbf{D}_5$ in Table~1, we take $ p_1\equiv p_2 \equiv p_3 \equiv1$, so that the differential form is
$$(1 +y^2 -x^2 -x^3 +3xy^2, 2xy -3x^2y +y^3, 1 -y^2 +x^2 +x^3 -3xy^2).$$
In this case,   $\ker \lambda= \mathbf{Z}_5 $ and the discriminant function is the  $\mathbf{D}_5$-invariant given by
$$\delta(x,y)=(x^2 +y^2)^3 +2x^5 -20x^3y^2+10xy^4 +(x^2 +y^2)^2 -1.$$
This is illustrated in Fig.~\ref{dndn}.

We now consider $\mathbf{D}_6[\mathbf{Z}_6]$  choosing $ p_1\equiv p_2 \equiv 1$ and $p_3 \equiv 0$ in Table~\ref{tabela forma gerais}, so that the form is
$$( 2xy -4x^3y +4xy^3, x^2-y^2 +x^4-6x^2y^2 +y^4, -2xy +4x^3y -4xy^3).$$
In this case  $\lambda $ is trivial and the discriminant function is   $\mathbf{D}_6$-invariant and given by
$$\delta(x,y)=(x^2 +y^2)^4+2x^6-30x^4y^2+30x^2y^4 +2y^6 + (x^2 +y^2)^2.$$
The picture is given in Fig.~\ref{dnzn}.

We now turn to $\mathbf{D}_6[\mathbf{D}_{3}(\kappa_x)]$  taking $ p_1\equiv1$ and $p_2 \equiv p_3 \equiv 0$ in Table~1, so that the form is
$$( -x, -y, x).$$
In this case,   $\ker \lambda = \mathbf{D}_3(\kappa_y)$ and the discriminant set is the origin, given by the zero set of
$$\delta(x,y)=x^2 + y^2.$$
The picture is given in Fig.~\ref{dndnd}.

\begin{figure}[!h]
	\centering
	\subfigure[$()$][\label{kxkx}]{
		\includegraphics[scale=0.5]{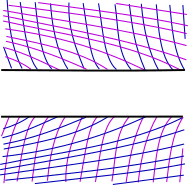}}
	\qquad 
	\subfigure[$(2xy, x^2-y^2, -2xy )$][\label{kxki}]{
		\includegraphics[scale=0.5]{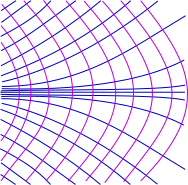}}
	\caption{Configurations with symmetry groups $\mathbf{Z}_2$ and  $\mathbf{Z}_2[{\bf 1}]$.}
	\label{fig04}
\end{figure}

Consider now  $\mathbf{Z}_2$ for $p_1 \equiv p_2 \equiv p_3 \equiv 1$ from Table~1, so that the form is
$$( 1, y, 1).$$
We have   $\ker \lambda = {\bf 1}$ and the discriminant function is $\mathbf{Z}_2$-invariant and given by
$$\delta(x,y)=y^2-1.$$
See the illustration of this case in Fig.~\ref{kxkx}.

Fig.~\ref{kxki} is a $\mathbf{Z}_2[{\bf 1}]$ case, for which we have chosen $p_1 \equiv p_2 \equiv 1$ and $p_3\equiv-1$ in Table \ref{tabela forma gerais} , so that the form is
$$( y, 1, -y).$$
The homomorphism $\lambda $ is trivial and the discriminant function is  $\mathbf{Z}_2$-invariant and given by $$\delta(x,y)=y^2 +1.$$

\begin{figure}[!h]
	\centering
	\subfigure[()][\label{z2z2z2z2}]{
		\includegraphics[scale=0.5]{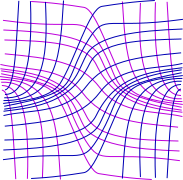}}
	\qquad 
	\subfigure[$()$][\label{z2z2zi}]{
		\includegraphics[scale=0.5]{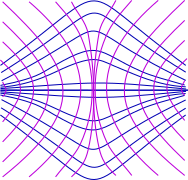}}
	\qquad 
	\subfigure[$()$][\label{z2z2z2k}]{
		\includegraphics[scale=0.5]{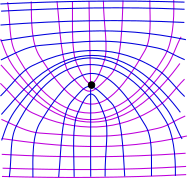}}
	\caption{ Configurations with symmetry groups $\mathbf{Z}_2 \times \mathbf{Z}_2, \mathbf{Z}_2 \times \mathbf{Z}_2[\mathbf{Z}_2(-I)]$ and  $\mathbf{Z}_2 \times \mathbf{Z}_2 [\mathbf{Z}_2(\kappa_x)]$.}
	\label{fig05}
\end{figure}

For $\mathbf{Z}_2 \times \mathbf{Z}_2$ in Table \ref{tabela forma gerais}, we take $p_1 \equiv p_2 \equiv 1$ and $p_3\equiv -1$, so that the differential form is  $$( -x, -y, x).$$  In this case $\ker \lambda = \mathbf{Z}_2$ and  the discriminant function is the $\mathbf{Z}_2 \times \mathbf{Z}_2$-invariant  given by $$\delta(x,y)=x^2y^2+1.$$ This is illustrated in Fig. \ref{z2z2z2z2}.

We now consider  $\mathbf{Z}_2\times \mathbf{Z}_2[\mathbf{Z}_2(-I)]$ choosing $ p1\equiv p_2 \equiv 1$ and $p_3 \equiv-1$ in Table \ref{tabela forma gerais}, so that the form is $$( xy, 1, -xy).$$ In this case $\lambda $ is trivial and the discriminant function is $\mathbf{Z}_2\times \mathbf{Z}_2$-invariant and given by $$\delta(x,y)=x^2y^2+1.$$ The picture is given in Fig. \ref{z2z2zi}.

Finally, consider $\mathbf{Z}_2\times \mathbf{Z}_2[\mathbf{Z}_2(\kappa_x)]$ taking $p_1 \equiv1, p_2 y^2$ and $p_3 \equiv -1$ in Table \ref{tabela forma gerais}, so that differential form is  $$( x, y^3, -x).$$  In this case $\ker \lambda = \mathbf{Z}_2(\kappa_y)$ and the discriminant function is  given by $$\delta(x,y)=x^2 +y^6.$$ See the illustration of this case in Fig. \ref{z2z2z2k}. \\

{\bf Acknowledgements}:	Research of P. T.  was supported by CAPES Grant 8474758/D.

\end{document}